\colorlet{cite}{LimeGreen!50!Green}
\newtheoremstyle{mydef}
  {}		
  {}		
  {}		
  {}		
  {\scshape}	
  {. }		
  { }		
  {\thmname{#1}\thmnumber{ #2}\thmnote{ #3}}	
\theoremstyle{plain}	
\newtheorem{theorem}{Theorem}[section] 
\newtheorem{lemma}[theorem]{Lemma} 
\newtheorem*{theorem*}{Theorem}
\theoremstyle{mydef} 
\newtheorem*{conjecture*}{Conjecture}
\theoremstyle{remark}
\newtheorem*{proposition*}{Proposition}
\newtheorem*{lemma*}{Lemma}
\newtheorem*{corollary*}{Corollary}
\theoremstyle{definition}
\newtheorem{example}[theorem]{Example}
\newtheorem{definition}[theorem]{Definition}
\newtheorem{remark}[theorem]{Remark}
\theoremstyle{remark}
\newtheorem{claim}[theorem]{Claim}
\DeclareMathOperator{\Diag}{Diag}
\DeclareMathOperator{\SL}{\mathrm{SL}}
\DeclareMathOperator{\SU}{SU}
\DeclareMathOperator{\UU}{U}
\DeclareMathOperator{\Ad}{Ad}
\DeclareMathOperator{\Mon}{Mon}
\DeclareMathOperator{\Div}{Div}
\DeclareMathOperator{\LG}{LG}
\DeclareMathOperator{\Fuk}{Fuk}
\renewcommand{\S}{\mathrm{S}}
\newcommand{\ce}{\mathrel{\mathop:}=}
\title[Birationally equivalent Landau--Ginzburg models]
{Birationally equivalent Landau--Ginzburg models\\ 
on cotangent bundles and adjoint orbits}
\author{ Bruno Suzuki}
\begin{document}

\begin{abstract}{
We show that the Lie potential on the minimal semisimple adjoint orbit 
$\mathcal O_n$ of $\mathfrak{sl}(n+1,\mathbb C)$ coincides with toric potential on $T^*\mathbb P^{n}$.
We then study the corresponding Landau--Ginzburg models in deformation families and give some  examples of how the deformations 
affect the mirrors. }
\end{abstract}

\maketitle

\tableofcontents

\section{Strategy}

We show that the Lie theoretical potential   on the minimal semisimple adjoint orbit 
$\mathcal O_n$ of $\mathfrak{sl}(n+1,\mathbb C)$  as defined in \cite{GGS1} 
coincides with standard  potential on $T^*\mathbb P^{n}$ associated  with the Hamiltonian
action of a 1-dimensional algebraic torus $\mathbb C^*$. 
More precisely, we show that $(T^*\mathbb P^{n},w)$ and $(\mathcal O_n, w)$
are birationally equivalent.
Having described this coincidence, 
we are then able to write a potential on the  deformation family containing both $T^*\mathbb P^n$ and $\mathcal O_n$, 
so that it can be upgraded to a deformation of Landau--Ginzburg models.
We then explore specific examples of  Landau--Ginzburg models  and give examples of how their deformations 
affect their mirrors.
The proposed strategy here is to consider both deformations of varieties and deformations of the potential,
combining them to describe  deformations of LG models, obtaining families
$$\LG \longrightsquigarrow \LG'$$ 
where $\LG$ and $\LG'$ behave very differently dualitywise.
Indeed, we will have examples $\LG_0= (T^*\mathbb P^1, w)$ that is a selfdual LG model defined on a 
toric variety which is not affine, deforming to $\LG_2= (\mathcal O_1, w)$ which is a nonselfdual LG model defined 
on a nontoric affine variety.

 Landau--Ginzburg models on  adjoint orbits of semisimple Lie algebras $f_H\colon \mathcal{O}(H_0) \rightarrow \mathbb C$ 
 were constructed by 
 Gasparim--Grama--San Martin in \cite{GGS1,GGS2}.
	We consider here the minimal case (as in Definition \ref{minimal}) 
which we denote by $\LG_n$.
These Landau--Ginzburg models  were shown to satisfy the conjecture of 
Katzarkov, Kontsevich and Pantev about three new Hodge theoretical invariants that take into consideration the potential, 
see \cite{BGRS2, GR} and also \cite{BCG} for some more features about Hodge diamonds of compactifications.

It would be desirable to carry out the study of all deformations families involving 
 semisimple adjoint orbits of $\mathfrak{sl}(n+1,\mathbb C)$. However, here we will carry out details only for the 
 case of minimal orbits, presenting more details for the case of $\mathfrak {sl}(2,\mathbb C)$,
which is already nontrivial, as we shall see. Indeed,
 \cite{BBGGS} showed that $\LG_2$ has no projective mirrors, 
 and furthermore, proved  that the Fukaya--Seidel category of $\LG_2$ is equivalent to a proper subcategory of the category of coherent sheaves on the second Hirzebruch surface, 
 which however is not realised by any complex subvariety. 
As a consequence of 
 \cite{BBGGS} we know that there does not exist any projective variety whose category of coherent sheaves
 is equivalent to the Fukaya--Seidel category of $\LG_2$. 
 However, in \cite{G}, using the intrinsic mirror symmetry recipe of Gross and Siebert \cite{GS}, Elizabeth Gasparim
 constructed 
 in \cite{G} a
 Landau--Ginzburg mirror for $\LG_2$ which is geometrically extremely different from $\LG_2$. 
 This nontrivial behaviour of duality of $\LG_2$ will be contrasted to the selfduality of $\LG_0$.

\section{Deformations}

\begin{definition}
A \textbf{Landau--Ginzburg  model} (LG)  is  a pair $(X,w)$ formed by a  variety
$X$ and a holomorphic  function $w\colon X \rightarrow \mathbb C$ (or $\mathbb P^1$) called the superpotential.
\end{definition}

To expand our families of examples, we will consider pairs $(X,R)$ where $R\colon X \rightarrow \mathbb C$ 
is a rational function, defined as $R= f/g$,  then this rational function can be considered as a map to  
$\mathbb P^1 $ simply by making $R = [f:g]$, see \cite{BGGS}.
Then, such a map can then be promoted to an LG model by blowing up the indeterminacy locus $\mathcal I$ of $R$, that is 
the set of points where $f=g=0$, and then
$R$ can be extended to  a holomorphic map on $\widetilde{X}$,  the blow-up of $X$ at $\mathcal{I}$.
 Therefore in the examples that follow, we wish to consider {rational potentials}.
 
\begin{definition} 
A \textbf{rational Landau--Ginzburg  model}   is  a pair $(X,R)$ formed by a  variety
$X$ and a rational function $R\colon X \rightarrow \mathbb  P^1$ called the superpotential.
\end{definition}
 
From now on, by a LG model we mean a rational Landau--Ginzburg model (note that this notion 
includes all LG models that have a holomorphic potential). 

\begin{definition}
A (commutative) \textbf{deformation family} of a Landau--Ginzburg  model  $(X,w)$ is a smooth family of 
 Landau--Ginzburg models $(X_t, w_t)$, with $t \in D \subset \mathbb C^n$ an open ball containing $0$,  
such that $(X_0, w_0) = (X,w)$.
We call $X_t$ a \textbf{deformation} of $X_0$, 
denoted by 
$(X_0, w_0) \longrightsquigarrow (X_t,w_t)$.
\end{definition}

For Landau--Ginzburg models considered in this text, the variety $X$ will be a noncompact Calabi--Yau. 
This is exactly the case for which Katzarkov, Kontsevich and Pantev proved smoothness 
of the deformation spaces \cite{KKP}.

\begin{example} \label{ex1}
Consider $T^*\mathbb{P}^1$ with coordinates $([1,x],y)$ and the Landau--Ginzburg models $\LG_0 = (T^*\mathbb{P}^1, w_0)$, 
$\LG_1 = (T^*\mathbb{P}^1, w_1)$
given by the functions 
$w_0 = x+y+\frac{y^2}{x}$ and
$w_1 = 2x$.

Using the potential 
$w_t= 2x+t(x+y+\frac{y^2}{x})$  
on $X_t = T^*\mathbb{P}^1$
we obtain the deformation 
$$\LG_0 \longrightsquigarrow \LG_1.$$

\end{example}

\begin{definition} 
Let $G$ be a Lie group and $\mathfrak g$ its Lie algebra. The {\bf adjoint orbit} of 
an element $H_0 \in \mathfrak g$ is 
$$\mathcal{O}\left( H_0\right) =\mathrm{Ad}(G)\cdot H_0=
\{g H_0 g^{-1}:g\in G\}.$$
\end{definition}

In all examples considered here $H_0$ is a diagonal matrix. 
Since we work with complex semisimple Lie groups $G$, the orbit $\Ad(K)\cdot H_0$ of a compact subgroup $K < G$ is a \textbf{flag manifold } isomorphic to a homogeneous manifold $G/P$, where $P$ is a parabolic subgroup of $G$.

Following 
\cite[Def.\thinspace 1.1]{BGRS2}, 
we have:
\begin{definition}\label{minimal}
Let $\mathcal O_n$ the adjoint orbit of $H_0=\Diag(n,-1,\ldots,-1)$ in $\mathfrak{sl}(n+1,\mathbb{C})$, we call it the {\bf minimal orbit}. \end{definition}

The minimal adjoint orbit $\mathcal O_n$ is diffeomorphic to the cotangent bundle of the projective space 
 $\mathbb{P}^{n}$ \cite[Thm.\thinspace2.1]{GGS2}. $\mathcal O_n$ is a nontoric Calabi--Yau manifold.

\begin{example} \label{ex2}
Let $\LG_2 = (\mathcal{O}_1, f_H)$, 
where $\mathcal{O}_1$ is the semisimple adjoint orbit of the Lie algebra $\mathfrak{sl}(2,\mathbb{C})$,
obtained by choosing $H_0=\Diag(1,-1)$. 

Lemma \ref{deform.fam} describes 
$\mathcal{O}_1$  as a deformation  of $T^*\mathbb{P}^1$,
explicitly presenting the  deformation family.
We put on $\mathcal{O}_1$ the potential given by
$f
\begin{pmatrix}
x&y\\
z&-x
\end{pmatrix}
 = 2x.$
Observe that this potential coincides  with the potential $w$ defined on $T^*\mathbb P^1$ in Example \ref{ex1},
forming $LG_1$.
Defining the same potential $2x$ on every element of the family, we obtain a deformation of Landau--Ginzburg models
$$\LG_1 \longrightsquigarrow \LG_2.$$
We will present a more detailed approach to this deformation family in Example \ref{partial},
including a partial compactification.
\end{example}

\begin{remark}
 The choice of potential on $\mathcal O_1$ comes from the following construction.
 We choose 
$H=H_0= \left(\begin{matrix} 1 & 0 \\ 0 & -1 \end{matrix}\right),$
and then the adjoint orbit 
$$\mathcal O_1= \mathrm{Ad}(\mathrm{SL}(2)) H_0= \left\{gH_0g^{-1}: g \in \mathrm{SL}(2)\right\}$$
 is the (smooth) affine hypersurface in $\mathbb{C}^3$ given by the equation
$x^2+yz-1=0.$	
The potential $f_H\colon \mathcal O_1 \rightarrow \mathbb C$, defined by
$ A \mapsto \langle H,A\rangle$ gives simply $f_H(A) = 2x$.

	As a real manifold $\mathcal{O}_1$ is diffeomorphic to  $T^*\mathbb{P}^1$.
	Now we recall a lemma from \cite{BBGGS} which shows the existence of Lagrangian submanifold of the adjoint orbit of $\mathfrak{sl}(2)$ 
that is diffeomorphic to $\mathbb P^1$ 
(observe this is not granted a priori since the symplectic structure of $\mathcal{O}(H_0)$ is not that of $T^* \mathbb P^1$)
was proved in 
\cite[Lem.\thinspace2.1]{BBGGS}.	However, $\mathcal O_1$ is affine, hence it does not contain an algebraic subvariety 
isomorphic to $\mathbb P^1$.
\end{remark}

\begin{example} \label{ex3}
Observe that in the Example \ref{ex1} we varied the potential while keeping the underlying manifold fixed, 
whereas in Example \ref{ex2} we varied the manifold while keeping the potential.
Combining them, we obtain a deformation that changes both, namely
$$
\LG_0 \longrightsquigarrow \LG_2.
$$
\end{example}

We then wish to compare the mirrors of $\LG_0$ and $\LG_2$, and we will see 
that they behave very differently. Since $T^*\mathbb P^1$ is a toric variety, 
we can use toric duality for $\LG_0$.

\section{Toric duality}

\begin{definition}
An $n$-dimensional variety $X$ is called \textbf{toric} if it satisfies the following two properties: 

\begin{itemize}
\item there exists an action of the torus  $(\mathbb{C}^*)^n \times X \rightarrow X$, and
\item the Zariski closure of $X$ is the torus:  $\overline{(\mathbb{C}^*)^n}= X.$
\end{itemize} 
\end{definition}

For basic notions of toric varieties see \cite{cox}.
The polytope representing a toric variety is the convex figure obtained by drawing the complement of the torus, i.e., 
$\overline{X} - (\mathbb{C}^*)^n$. More precisely, it is the image of $\overline{X}$ by the moment map, 
which is, by the theorems of Atiyah and Guillemin--Sternberg the convex hull of the image of the fixed points of the torus action.

\begin{example}
$\mathbb{P}^1 \times \mathbb{P}^1$ 
is depicted by a square
\begin{tikzpicture}[local bounding box=bb,baseline=(bb.center)]

\filldraw[Thistle] (0,0) -- ++(1,0) -- ++(0,1) -- ++(-1,0) -- ++(0,-1);
\foreach \x in {0,...,1}
		{	\foreach \y in {0,...,1}
			{
			\filldraw (\x,\y) circle (0.03cm);
			}
		}
\draw (0,0) -- ++(1,0) -- ++(0,1) -- ++(-1,0) -- ++(0,-1);
\end{tikzpicture}

\end{example}

\begin{example}
$\mathbb{P}^2$ 
is depicted by a triangle
\begin{tikzpicture}[local bounding box=bb,baseline=(bb.center)]

\filldraw[Thistle] (0,0) -- ++(1,0) -- ++(-1,1) -- ++(0,-1);
\foreach \x in {0,...,1}
		{	\foreach \y in {0,...,1}
			{
			\filldraw (\x,\y) circle (0.03cm);
			}
		}
\draw (0,0) -- ++(1,0) -- ++(-1,1) -- ++(0,-1); 
\end{tikzpicture}
\end{example}

\begin{example}
$T^*\mathbb{P}^1$
is depicted by 
\begin{tikzpicture}[local bounding box=bb,baseline=(bb.center)]
\filldraw[Thistle] (0,1) -- ++(0,-1) -- ++(1,0) -- ++(2,1);

\foreach \x in {0,...,3}
		{	\foreach \y in {0,...,1}
			{
			\filldraw (\x,\y) circle (0.03cm);
			}
		}

\draw (0,1) -- ++(0,-1) -- ++(1,0) -- ++(2,1);

\foreach \i in {0,...,3}
	{
	\filldraw[white] (\i,1) circle (0.02cm);
	}	
\end{tikzpicture}
\end{example}

	For the case when $X$ is a toric variety, 
Clarke \cite{clarke} showed that one can state a generalised version of the Homological Mirror Symmetry conjecture of Kontsevich \cite{Kon}
as a duality between LG models. 
He also showed that this construction generalises the dualities due to Batyrev--Borisov \cite{BB}, 
Berglung--H\"ubsch \cite{BH}, Givental \cite{G}, and Hori--Vafa \cite{HV}. 
	Given toric LG model $(X,f)$, the construction in \cite{clarke} produces a dual LG model $(X^\vee, f^\vee)$. 
	
We recall some definitions from \cite{clarke}.
Associated to a toric
Landau--Ginzburg model is its linear data:
\begin{enumerate}
\item the linear data $\Div$, and
\item the linear data $\Mon$.
\end{enumerate}
 \cite{clarke} uses a more refined version of this data,
which includes the compactified K\"ahler class and for duality requires the kopasectic condition, but we will not need such details here. 
The dual toric Landau-Ginzburg model is obtained by exchanging $\Div$ and $\Mon$, that is 
\[
\Div(X) = \Mon(f^\vee), \quad \quad \Div(X^\vee) = \Mon(f).
\] 
	
The matrix $\Mon$ describes the infinitesimal action on monomials, for details see \cite{clarke}.
The matrix $\Div$ encodes the divisor of the character-to-divisor map. In particular, the Chow group of $X$ is the 
cokernel of $\Div(X)$. In our examples the rows of $\Div$ will represent the divisors of $X$ and the rows of $\Mon$ will represent the monomials appearing in the potential $f$. 

\begin{definition}
If there is an isomorphism $(X^\vee, f^\vee)\simeq (X,f)$, then the LG model is called \textbf{selfdual}. 
\end{definition}

We now give some elementary examples of toric duality.
	Using the work of Clarke in \cite{clarke}, Callander, Gasparim, Jenkins and Silva \cite{CGJS} considered
mirror symmetry for Landau--Ginzburg models defined over vector bundles on projective spaces, 
showing that both the cotangent bundle of $\mathbb P^1$ and the resolved conifold admit potentials that form a self-mirror LG model.

\begin{example}[$\LG_0 \Leftrightarrow \LG_0$] \label{selfdual}
For the case of $T^*\mathbb P^1$ using the potential $x+y+\frac{y^2}{x}$ one obtains a LG model
	that is dual to itself  \cite[Prop.\thinspace6.2]{CGJS} as depicted in Figure \ref{self}. Self-duality of this LG model 
	is verified by simply pointing out that in this case the toric data is
	$$\Mon = \Div= \left(\!\!\!
	\begin{array}{rr} 
	1& 0 \\
	-1 &2\\
	0&1
	\end{array} \!\right).$$

\begin{figure}[h]
\centering
\begin{tikzpicture}

\filldraw[Thistle] (0,1) -- ++(0,-1) -- ++(1,0) -- ++(2,1);

\foreach \x in {0,...,3}
		{	\foreach \y in {0,...,1}
			{
			\filldraw (\x,\y) circle (0.03cm);
			}
		}

\draw (0,1) -- ++(0,-1) -- ++(1,0) -- ++(2,1);

\foreach \i in {0,...,3}
	{
	\filldraw[white] (\i,1) circle (0.02cm);
	}

\node at (0.5,-3) {$x + y + \dfrac{y^2}{x}$};
\node at (8.5,-3) {$x + y + \dfrac{y^2}{x}$};

\draw[->] (0.5,-0.5) to [out=-60,in=120] (8.5,-2.7);

\draw[<-, dashed] (8.5,-0.5) to [out=240,in=60] (0.5,-2.7);

\begin{scope}[shift={(6,0)}] 
\filldraw[Thistle] (0,1) -- ++(0,-1) -- ++(1,0) -- ++(2,1);

\foreach \w in {0,...,3}
		{	\foreach \z in {0,...,1}
			{
			\filldraw (\w,\z) circle (0.03cm);
			}
		}

\draw (0,1) -- ++(0,-1) -- ++(1,0) -- ++(2,1);

\foreach \i in {0,...,3}
	{
	\filldraw[white] (\i,1) circle (0.02cm);
	}	
\end{scope}

\end{tikzpicture}
\caption{Selfdual $\LG_0$ model}\label{self}
\end{figure}
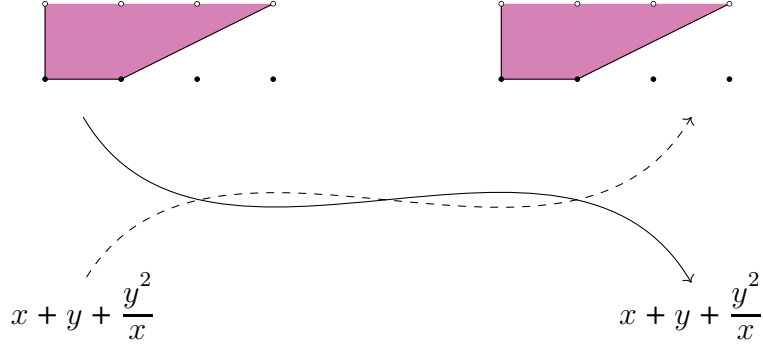
\end{example}

\begin{example}(A nontrivial duality) We now see a nontrivial duality $(Y,g) = (X,f)^\vee$.
Consider the Landau--Ginzburg models  
$(X,f) =(\mathbb P^2,  x + y + x^{-1}+y^{-1})$ and $(Y,g)=(\mathbb P^1 \times \mathbb P^1,x + y + x^{-1}y^{-1})$
 as depicted in Figure \ref{nondual}.

Since the $\Div$ matrix is given by the inward normals of the moment polytope, we have:
\[
\Div_{X} = 
\left(\!\!
\begin{array}{rr}
1 & 0 \\
0 & 1 \\
-1 & -1
\end{array}
\right), \quad
\Div_{Y} =
\left(\!\!
\begin{array}{rr}
1 & 0 \\
0 & 1 \\
-1 & 0 \\
0 & -1
\end{array}
\right).
\]
To see that  $(Y,g)$ is dual to $(X,f)$ just observe that $\Mon_{g}=\Div_{X}$, %
and  $\Div_{Y} = \Mon_{f}$.
\end{example}

\begin{figure}[h]
\centering
\begin{tikzpicture}

\filldraw[Thistle] (0,0) -- ++(1,0) -- ++(0,1) -- ++(-1,0) -- ++(0,-1);
\filldraw[Thistle] (6,0) -- ++(1,0) -- ++(-1,1) -- ++(0,-1);

\foreach \x in {0,...,1}
		{	\foreach \y in {0,...,1}
			{
			\filldraw (\x,\y) circle (0.03cm);
			}
		}
\draw (0,0) -- ++(1,0) -- ++(0,1) -- ++(-1,0) -- ++(0,-1);

\foreach \w in {6,...,7}
		{	\foreach \z in {0,...,1}
			{
			\filldraw (\w,\z) circle (0.03cm);
			}
		}

\draw (6,0) -- ++(1,0) -- ++(-1,1) -- ++(0,-1);

\node at (0.5,-4) {$x + y + \dfrac{1}{xy}$};
\node at (6.5,-4) {$x + y + \dfrac{1}{x} + \dfrac{1}{y}$};

\draw[->] (0.5,-0.5) to [out=-40,in=140] (6.5,-3.5);

\draw[->, dashed] (0.5,-3.5) to [out=40,in=220] (6.5,-0.5);

\end{tikzpicture}
\caption{Nontrivial toric duality}\label{nondual}
\end{figure}
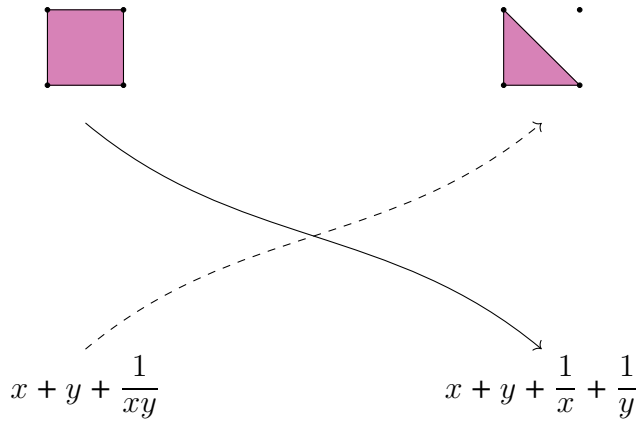

\begin{example}(Hyperk\"ahler families)
The birational isomorphism considered here could be considered in 
a much broader context. In fact, if instead of considering cotangent bundles of projective spaces, 
we considered cotangent bundles of generalised flag manifolds, then we could consider
families of hyperk\"ahler structures; we recall the definition.

A hyperk\"ahler structure on the Riemannian manifold $(M,g)$ is expressed using three almost complex structures $I, J, K$  satisfying 
\[
I^2 = J^2 = K^2 = IJK = -1,
\]
such that any linear combination $aI+bJ+cK$ with $a^2+b^2+c^2=1$ is a K\"ahler structure on $M$, whose corresponding complex structures may be regarded as a deformation of $M$.
All hyperk\"ahler manifolds are Ricci-flat and consequently Calabi--Yau manifolds. 

Families of hyperk\"ahler structures containing both cotangent bundles of flag manifolds and
 adjoint orbits   were studied by Kronheimer \cite{Kro1}, Biquard \cite{Biq}, and Kovalev \cite{Ko},
 we briefly recall some of their results.
Let $G$ be a complex semisimple Lie group with Lie algebra $\mathfrak g$ and let $H$ denote the stabiliser 
of an element of $\mathfrak g$.
Kovalev's theorem \cite[Thm.\thinspace1.1]{Ko}\label{teo1.1Kovalev} 
describes a family of hyperk\"ahlerian structures on $G/H$.
In particular, this family contains a complex structure for which the quotient $G/H$ is isomorphic to a semisimple adjoint orbit of $G$.
More explicitly,
the adjoint orbit $\Ad(G) \cdot H_0$ occurs as a deformation of the cotangent bundle of the flag manifold $\Ad(K) \cdot H_0$, where $K$ is a compact subgroup of $G$.
	
Many interesting examples can be obtained using the adjoint orbit $\mathcal{O}(H_0)$ of an element 
of the Cartan subalgebra $H_0 \in \mathfrak h \subset \mathfrak g$. 
Suppose we take $\mathfrak g =\mathfrak{sl}(n,\mathbb C)$, hence
we would have a flag manifold 
$$\mathbb F= \mathrm{Ad}(\SU(n))\cdot H_0$$ of $\SU(n)$, 
and a diffeomorphism
$$\mathcal O(H_0) = \mathrm{Ad}(\SL(n))\cdot H_0\simeq  T^*\mathbb F$$ 
to the cotangent bundle of $\mathbb F$, where the flag manifold is isomorphic a homogeneous manifold  of the form
$$\mathbb F={\SU(n)}/{\S(\UU(d_1)\times \UU(d_2) \times \cdots \times \UU(d_k))}$$
 with $d_1+d_2+\cdots +d_k=n,$ see \cite{Arv}.
 \end{example}

\section{LG models from Lie theory}
In this section we develop the description of the Lie theoretical potential on adjoint orbits in local coordinates,
so that it can be compared with the toric potential.
We first recall some results from  \cite{GGS1} and  \cite{GGS2}.
	Let $\mathfrak{g}$ be a complex semisimple Lie algebra with Cartan subalgebra $\mathfrak{h}$, and $\mathfrak{h}_{\mathbb{R}}$ the real subspace generated by the roots of $\mathfrak{h}$.
	Let $\Pi$ be the set of all roots of $\mathfrak g$.
	An element $H\in\mathfrak{h}$ is  called {regular} if $\alpha \left(
H\right) \neq 0$ for all $\alpha \in \Pi $.

 Choose $H_{0}\in \mathfrak{h}$ and a regular element $H\in \mathfrak{h}_{\mathbb{R}}$,
 the {\bf height function} 
 $f_{H}\colon \mathcal{O} \left( H_{0}\right) \rightarrow \mathbb{C}$
 with respect to $H$, 
 defined by
  \[
    f_{H}\left( x\right) = \langle H,x\rangle, \qquad x\in \mathcal{O}\left(H_{0}\right),
  \]
 has a finite number of isolated singularities and defines a symplectic Lefschetz fibration on $\mathcal{O}(H_0)$, 
\cite[Thm.\thinspace 3.1]{GGS1}. It is this heigh function that will give rise to the expression of what we 
call the Lie potential. \\

  We now describe the potential on the adjoint orbit in detail, 
writing its description in each coordinate chart by a polynomial map. 
  The construction of this intermediate step also clarifies why the potential gives rise to a Lefschetz fibration,
 which is by definition locally a quadratic polynomial map.

	Given $H_{0}\in \mathfrak{h}_{\mathbb{R}}$, let
  	\begin{equation*}
  	\mathfrak{n}_{H_{0}}^{+}=\sum_{\alpha \left( H_{0}\right) >0}\mathfrak{g}%
  	_{\alpha }\qquad \mathrm{and}\qquad \mathfrak{n}_{H_{0}}^{-}=\sum_{\alpha
  		\left( H_{0}\right) <0}\mathfrak{g}_{\alpha }
  	\end{equation*}%
  	be the sums of the eigenspaces of $\mathrm{ad}\left(H_{0}\right) $
  	associated to the positive and negative eigenvalues, respectively. 
  	The subspaces $\mathfrak{n}_{H_{0}}^{\pm }$ are nilpotent subalgebras.
  	If $G$ is a Lie group with  Lie algebra $\mathfrak{g}$ then the subgroups
  	$N_{H_{0}}^{\pm }=\exp \mathfrak{n}_{H_{0}}^{\pm }$ 
  	are closed and the set
  	\begin{equation*}
  	N_{H_{0}}^{-}Z_{H_{0}}N_{H_{0}}^{+}=N_{H_{0}}^{-}N_{H_{0}}^{+}Z_{H_{0}}
  	\end{equation*}%
  	is open and dense in $G$ where $Z_{H_{0}}$ is the centraliser of  $H_{0}$
  	in $G$ and the product
  	\begin{equation*}
  	\left( y,x,h\right) \in N_{H_{0}}^{-}\times N_{H_{0}}^{+}\times
  	Z_{H_{0}}\mapsto yxh\in N_{H_{0}}^{-}N_{H_{0}}^{+}Z_{H_{0}}
  	\end{equation*}%
  	is a diffeomorphism.
  	Consider now the adjoint orbit $\mathcal{O}\left( H_{0}\right) =%
  	\mathrm{Ad}\left( G\right) H_{0}=G/Z_{H_{0}}$. The set  
  	$%
  	\mathrm{Ad}\left( N_{H_{0}}^{-}N_{H_{0}}^{+}Z_{H_{0}}\right) H_{0}$,
  	denoted just by  $N_{H_{0}}^{-}N_{H_{0}}^{+}Z_{H_{0}}\cdot H_{0}$
  	satisfies
  	\begin{equation*}
  	N_{H_{0}}^{-}N_{H_{0}}^{+}Z_{H_{0}}\cdot
  	H_{0}=N_{H_{0}}^{-}N_{H_{0}}^{+}\cdot H_{0}
  	\end{equation*}%
  	since $Z_{H_{0}}\cdot H_{0}=H_{0}$. Given that  $N_{H_{0}}^{-}N_{H_{0}}^{+}Z_{H_{0}}$
  	is open and dense in  $G$ it follows that  $N_{H_{0}}^{-}N_{H_{0}}^{+}\cdot H_{0}$
  	is open and dense in  $\mathcal{O}\left( H_{0}\right) $. 
  	Moreover, the map 
  	\begin{equation*}
  	\left( y,x\right) \in N_{H_{0}}^{-}\times N_{H_{0}}^{+}\mapsto yx\cdot
  	H_{0}\in N_{H_{0}}^{-}N_{H_{0}}^{+}\cdot H_{0}
  	\end{equation*}%
  	is a diffeomorphism.
  	
  	On the other hand, $\exp\colon \mathfrak{n}_{H_{0}}^{\pm }\rightarrow N_{H_{0}}^{\pm
  	}$ is a diffeomorphism. Therefore,
  	\begin{equation*}
  	\left( Y,X\right) \in \mathfrak{n}_{H_{0}}^{-}\times \mathfrak{n}%
  	_{H_{0}}^{+}\mapsto e^{\mathrm{ad}\left( Y\right) }e^{\mathrm{ad}\left(
  		X\right) }\cdot H_{0}\in N_{H_{0}}^{-}N_{H_{0}}^{+}\cdot H_{0}
  	\end{equation*}%
  	defines a coordinate chart of  $\mathcal{O}\left( H_{0}\right) $, around
  	$H_{0}$ (with open and dense image).
  	
  	The goal here is to describe the  Lefschetz fibration on  $\mathcal{O}%
  	\left( H_{0}\right) $ defined by  $f_{H}\left( x\right) =\langle H,x\rangle 
  	$ in this coordinate chart. A  similar description also exists for coordinate charts 
  	around the singularities $wH_{0}$, $w\in \mathcal{W}$.
  	As described in the lecture notes of San Martin \cite{SM}, the expressions for 
  	$f_{H}$ in these coordinate charts are polynomial functions,
  	in fact, homogeneous polynomials of degree $2$.\\

\begin{theorem}\cite{SM} \label{SanMartin}
The map $f_H$ is polynomial.
\end{theorem}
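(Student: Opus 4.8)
The plan is to compute $f_H$ directly in the coordinate chart $(Y,X)\mapsto e^{\mathrm{ad}(Y)}e^{\mathrm{ad}(X)}\cdot H_0$ and to show that, once written in linear coordinates on $\mathfrak n_{H_0}^{-}\times\mathfrak n_{H_0}^{+}$, it is an honest polynomial. The two ingredients I would use are the $\Ad$-invariance of the Cartan--Killing form $\langle\,\cdot\,,\cdot\,\rangle$ and the nilpotency of $\mathrm{ad}$ on the nilradicals $\mathfrak n_{H_0}^{\pm}$. First I would rewrite the pairing: since $e^{\mathrm{ad}(Z)}=\Ad(\exp Z)$ on $\mathfrak g$ and the Killing form is $\Ad$-invariant, the factor $e^{\mathrm{ad}(Y)}$ can be moved across the pairing, giving
\[
f_H(Y,X)=\langle H,\ e^{\mathrm{ad}(Y)}e^{\mathrm{ad}(X)}\cdot H_0\rangle=\langle e^{-\mathrm{ad}(Y)}H,\ e^{\mathrm{ad}(X)}H_0\rangle .
\]
This is the convenient form, because now each slot involves a single nilpotent exponential applied to a fixed element of $\mathfrak h$.

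The key step is then the following observation. Every $X\in\mathfrak n_{H_0}^{+}$ has $\mathrm{ad}(X)$ nilpotent: since $\mathfrak n_{H_0}^{+}$ is the sum of the $\mathrm{ad}(H_0)$-eigenspaces with strictly positive eigenvalue, each application of $\mathrm{ad}(X)$ strictly raises the $\mathrm{ad}(H_0)$-eigenvalue, and there are only finitely many such eigenvalues in the finite-dimensional $\mathfrak g$. The same holds for $Y\in\mathfrak n_{H_0}^{-}$, with eigenvalues strictly lowered. Consequently both series $e^{-\mathrm{ad}(Y)}H$ and $e^{\mathrm{ad}(X)}H_0$ terminate after finitely many terms and are therefore $\mathfrak g$-valued polynomial maps in the linear coordinates of $Y$ and $X$. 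Pairing two polynomial $\mathfrak g$-valued maps with the fixed bilinear form $\langle\,\cdot\,,\cdot\,\rangle$ produces a polynomial in those coordinates, which already proves the theorem.

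Finally I would record the shape of the answer, since it also explains the Lefschetz (locally quadratic) feature. Using that the Killing form pairs $\mathfrak g_\alpha$ with $\mathfrak g_\beta$ only when $\alpha+\beta=0$ and annihilates $\langle\mathfrak h,\mathfrak g_\alpha\rangle$ for $\alpha\neq 0$, the constant term is $\langle H,H_0\rangle$ and all genuinely surviving contributions are cross terms pairing the $Y$-expansion against the $X$-expansion, the lowest of which is bilinear. In the case relevant to this paper — the minimal orbit $\mathcal O_n$, where $\mathfrak n_{H_0}^{+}$ (and $\mathfrak n_{H_0}^{-}$) is abelian — both exponentials truncate at first order, and since $\langle H,[X,H_0]\rangle=\langle[Y,H],H_0\rangle=0$ one obtains
\[
f_H(Y,X)=\langle H,H_0\rangle-\langle[Y,H],[X,H_0]\rangle,
\]
a homogeneous quadratic in the chart coordinates (up to the additive constant). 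This is consistent with $f_H$ defining a Lefschetz fibration and with the linear expression $\langle H,A\rangle=2x$ of Example \ref{ex2}, which is instead written in the ambient coordinates of the orbit rather than in the nilpotent chart.

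The only real obstacle is the truncation step: one must be certain that the exponentials are genuine polynomials and not merely formal power series. This is exactly what the $\mathrm{ad}(H_0)$-grading of $\mathfrak g$ and its finiteness guarantee, and once that is in place the remainder is bookkeeping with the invariant form. I would therefore present the nilpotency argument carefully and treat the orthogonality relations of the Killing form as the routine input that isolates the surviving terms.
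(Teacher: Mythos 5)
Your proof is correct and follows essentially the same route as the paper's: truncate the exponentials to polynomial maps using nilpotency and then apply the linear functional $\langle H,\cdot\,\rangle$. The only (harmless) variation is that you move $e^{\mathrm{ad}(Y)}$ across the Killing form by $\Ad$-invariance, replacing the paper's double summation (\ref{formontedesoma}) with a product of two singly truncated series; your quadratic cross term in the abelian (minimal-orbit) case agrees with the paper's $\langle H,H_{0}\rangle+\langle H,[X,Y]\rangle$ up to the normalisation of the $\mathrm{ad}(H_{0})$-eigenvalue on $\mathfrak{n}_{H_{0}}^{+}$.
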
	

\begin{proof}
  	If $X\in \mathfrak{n}_{H_{0}}^{+}$ then $\left[ X,H_{0}\right] =-%
  	\mathrm{ad}\left( H_{0}\right) X\in \mathfrak{n}_{H_{0}}^{+}$ and since $%
  	\mathfrak{n}_{H_{0}}^{+}$ is nilpotent, there exists  $n\geq 1$ such that
  	\begin{equation*}
  	e^{\mathrm{ad}\left( X\right) }H_{0}=H_{0}+g^{+}\left( X\right)
  	\end{equation*}%
  	where  $g^{+}:\mathfrak{n}_{H_{0}}^{+}\rightarrow \mathfrak{n}_{H_{0}}^{+}$ 
  	is the polynomial map
  	\begin{equation*}
  	g^{+}\left( X\right) =\sum_{k=1}^{n}\frac{1}{k!}\mathrm{ad}\left( X\right)
  	^{k}\left( H_{0}\right) .
  	\end{equation*}%
  	Now, if  $Y\in \mathfrak{n}_{H_{0}}^{-}$, then similarly $\left[ Y,H_{0}%
  	\right] =-\mathrm{ad}\left( H_{0}\right) Y\in \mathfrak{n}_{H_{0}}^{-}$. 
  	Moreover, $\mathfrak{n}_{H_{0}}^{-}$ is nilpotent and $\mathrm{ad}%
  	\left( Y\right) $ is  nilpotent. Thus, there exists $m\geq 1$ such that 
  	\begin{equation}
  	e^{\mathrm{ad}\left( Y\right) }e^{\mathrm{ad}\left( X\right)
  	}H_{0}=H_{0}+g^{-}\left( Y\right) +\sum_{j=0}^{m}\sum_{k=1}^{n}\frac{1}{j!}%
  	\frac{1}{k!}\mathrm{ad}\left( Y\right) ^{j}\mathrm{ad}\left( X\right)
  	^{k}\left( H_{0}\right)  \label{formontedesoma}
  	\end{equation}%
  	where $g^{-}\colon\mathfrak{n}_{H_{0}}^{-}\rightarrow \mathfrak{n}_{H_{0}}^{-}$ 
  	is the polynomial map
  	\begin{equation*}
  	g^{-}\left( Y\right) =\sum_{j=1}^{m}\frac{1}{j!}\mathrm{ad}\left( Y\right)
  	^{j}\left( H_{0}\right) .
  	\end{equation*}%
  	These expressions show that  
  	$f_{H}\left( e^{\mathrm{ad}\left(
  	Y\right) }e^{\mathrm{ad}\left( X\right) }H_{0}\right) $ is polynomial in $X$ and $Y$ given that $f_{H}$ is the restriction of the linear map 
  	$x\mapsto \langle H,x\rangle $. 
  	
  	Now, observe that 
  	$f_{H}\left(g^{-}\left( Y\right) \right) =0$ 
  	and consequently 
  	$$f_{H}\left( e^{\mathrm{ad}%
  		\left( Y\right) }e^{\mathrm{ad}\left( X\right) }H_{0}\right) =\langle
  	H,H_{0}\rangle +\langle H,\ast \rangle, $$ 
  	where $\ast$ is the part of the double summation in Equation  (\ref{formontedesoma}) belonging to $\mathfrak{h}$.
\end{proof}
  	
	\begin{example} 
  		
	Let 
	$\mathfrak{g}=\mathfrak{sl}\left(n+1,\mathbb{C}\right)$ 
	and choose $H_{0}$ such that the flag
	$\mathbb{F}_{H_{0}}$ 
	is the projective space $%
  	\mathbb P^{n}$. We take
	\begin{equation*}
	H_{0}=\mathrm{Diag}(n,-1,\ldots ,-1).
	\end{equation*}%
	In such a case, if $X\in \mathfrak{n}_{H_{0}}^{+}$, then
  	\begin{equation*}
  	X=\left( 
  	\begin{array}{cc}
  	0_{1\times 1} & v \\ 
  	0 & 0_{n \times  n }%
  	\end{array}%
  	\right),
  	\end{equation*}%
  	with  $v\in \mathbb{R}^{n}$, 
  	whereas $Y\in \mathfrak{n}_{H_{0}}^{-}$
  	if and only if  
  	$Y^{T}\in \mathfrak{n}_{H_{0}}^{+}$. If $X\in \mathfrak{n}%
  	_{H_{0}}^{+}$, 
  	then 
  	\begin{equation*}
  	e^{\mathrm{ad}\left( X\right) }H_{0}=H_{0}+\left[ X,H_{0}\right] =H_{0}-X.
	\end{equation*}%
	Consequently, if  $Y\in \mathfrak{n}_{H_{0}}^{-}$, then 
	\begin{eqnarray*}
  	e^{\mathrm{ad}\left( Y\right) }e^{\mathrm{ad}\left( X\right) }H_{0}
  	&=&H_{0}-X+Y-\left[ Y,X\right] +\frac{1}{2}\left[ Y,Y-\left[ Y,X\right] %
  	\right] \\
  	&=&-X+\left( H_{0}-\left[ Y,X\right] \right) +\left( Y-\frac{1}{2}\left[ Y,%
  	\left[ Y,X\right] \right] \right)
  	\end{eqnarray*}
  	because $\left[ Y,H_{0}\right] =Y$,  given that the last term has already been assembled
  	into the sum 
  	$\mathfrak{n}_{H_{0}}^{+}\oplus \mathfrak{z}%
  	_{H_{0}}\oplus \mathfrak{n}_{H_{0}}^{-}$ 
  	where $\mathfrak{z}_{H_{0}}$ is the centraliser of $H_{0}$.

	Now, if 
	$H=\Diag(\lambda_{1},\ldots ,\lambda_{n+1})$, 
	then $H$ is orthogonal to both
	$\mathfrak{n}_{H_{0}}^{+}$ 
	and $\mathfrak{n}%
	_{H_{0}}^{-}$ (Cartan--Killing); thus 
 	\begin{eqnarray*}
	f_{H}\left( e^{\mathrm{ad}\left( Y\right) }e^{\mathrm{ad}\left( X\right)
	}H_{0}\right) &=&\langle H,H_{0}\rangle -\langle H,\left[ Y,X\right] \rangle
	\\
	&=&\langle H,H_{0}\rangle +\langle H,\left[ X,Y\right] \rangle .
	\end{eqnarray*}%
	We write
	\begin{equation*}
	X=\left( 
	\begin{array}{cc}
	0_{1\times 1} & {\bf x} \\ 
	0 & 0_{n \times n }%
	\end{array}%
	\right) \qquad Y=\left( 
	\begin{array}{cc}
	0_{1\times 1} & 0 \\ 
	{\bf y}^{T} & 0_{ n \times  n}%
	\end{array}%
	\right)
	\end{equation*}%
	with ${\bf x}=\left( x_{1},\ldots ,x_{n}\right) $ and 
	${\bf y}=\left( y_{1},\ldots ,y_{n}\right)$. 
	Since the Cartan--Killing product is $\langle A,B\rangle =2(n+1)\mathrm{tr}%
	\left(AB\right)$
	for matrices in $\mathfrak{sl}(n+1)$, it follows that 
	\begin{align*}
	f_{H}\left( e^{\mathrm{ad}\left( Y\right) }e^{\mathrm{ad}\left( X\right)
	}H_{0}\right) =&\\
	=2(n+1)\left[\mathrm{tr}(H H_{0})+\right.&\left. \left(
	\lambda_{1}-\lambda_{2}\right) x_{1}y_{1}+\cdots +\left( \lambda_{1}-\lambda_{n+1}\right)
	x_{n}y_{n}\right]
	\end{align*}
	which is a degree $2$ polynomial.

	Similar calculations go through if we replace  $H_{0}$ by $w\cdot H_{0}$: if $%
	w=\left( 1j\right) $, then $\mathfrak{n}_{w\cdot H_{0}}^{+}$ consists of matrices having nonzero entries only on row $j$ ($0$
	in the diagonal), whereas 
	$\mathfrak{n}_{w\cdot H_{0}}^{-}$ consists of nonzero 
	entries only in column $j$. 
	Thus, if  $\left( x_{1},\ldots ,x_{n}\right) $ and $\left(
y_{1},\ldots ,y_{n}\right) $ are the entries on row and column $j$,
	respectively, then
	\begin{align*}
	f_{H}\left( e^{\mathrm{ad}\left( Y\right) }e^{\mathrm{ad}\left( X\right)
	}\left( w \cdot H_{0}\right) \right) =&\\
	=2(n+1)\left[\mathrm{tr}\left( H (w \cdot H_{0})\right)\right.
	+ &\left.\left(  \lambda_{j}-\lambda_{1}\right) x_{1}y_{1}+\cdots +\left(
	\lambda_{j}-\lambda_{n+1}\right) x_{n}y_{n}\right] .
	\end{align*}
	\end{example}

\begin{example}
Let 
$H=\Diag(1,0,-1), H_0=\Diag(2,-1,-1) \in \mathfrak{sl}(3, \mathbb{C})$ and choose 
$w = (123) \in \mathcal{W}$.
Then $w \cdot H_0=\Diag(-1,2,-1)$ and $w^2 \cdot H_0=\Diag(-1,-1,2)$, so that 
\begin{align*}
\langle H,H_0\rangle & =2(n+1)\mathrm{tr}%
	\left(H H_0\right)= 18, \\
	\langle H,w \cdot H_0\rangle & = 2(n+1)\mathrm{tr}%
	\left(H  (w \cdot H_0)\right)= 0, \\
\langle H,w^2 \cdot H_0\rangle & =2(n+1)\mathrm{tr}%
	\left(H  (w^2 \cdot H_0)\right)=  -18. 
\end{align*}
	Note that the constant term of the expression changes according to the choice of critical points.
\end{example}

\begin{definition}
We call the expression of $f_H$ written on a chart around $H_0$ the \textbf{Lie potential} on
 $\mathcal{O}_n \subset \mathfrak{sl}(n+1,\mathbb{C})$. 
It is given by 
\begin{equation} \label{liepot}
\boxed{
\textbf{f}_H (H_0)= 
\mathrm{tr}(H H_{0}) + (
	\lambda_{1}-\lambda_{2}) x_{1}y_{1}+\cdots +( \lambda_{1}-\lambda_{n+1})
	x_{n}y_{n}.
}\end{equation}
\end{definition}	

  	\vspace{12pt}%
  
  	\noindent%
	\begin{example} 
	Consider 
	$\mathfrak{g}= \mathfrak{sl}\left( n+1,\mathbb{C}\right)$ 
	and choose $H_{0}$ so that the flag $\mathbb{F}_{H_{0}}$ is the  Grassmannian 
	$\mathrm{Gr}_{k}\left( n+1\right)$. 
	Calculations are similar to the ones for the projective space. 
	The main difference is that the decomposition is given by blocks of size $k$ and $n-k+1$, so that the elements of 
	$\mathfrak{n}_{H_{0}}^{+}$ are matrices of the form
	\begin{equation*}
	\left( 
	\begin{array}{cc}
	0_{k\times k} & v \\ 
	0 & 0_{\left( n-k+1\right) \times \left( n-k+1\right) }%
	\end{array}%
	\right) .
	\end{equation*}%
	Similarly, 
	\begin{eqnarray*}
	e^{\mathrm{ad}\left( Y\right) }e^{\mathrm{ad}\left( X\right) }H_{0}
	&=&H_{0}-X+Y-\left[ Y,X\right] +\frac{1}{2}\left[ Y,Y-\left[ Y,X\right] %
	\right]\\   
	&=&-X+\left( H_{0}-\left[ Y,X\right] \right) +\left( Y-\frac{1}{2}\left[ Y,%
	\left[ Y,X\right] \right] \right)  \notag
	\end{eqnarray*}%
	and $f_{H}\left( e^{\mathrm{ad}\left( Y\right) }e^{\mathrm{ad}\left( X\right)}H_{0}\right) = \langle H,H_{0}-\left[ Y,X\right] \rangle$ 
	is a degree $2$ polynomial.
  	\end{example}

  	These 2 examples can be easily generalised to all cases when the eigenvalues of 
  	$\mathrm{ad}\left( H_{0}\right)$ are $0$ and $\pm 1$. 
  	Analogous calculations show that 
  	\begin{equation*}
  	e^{\mathrm{ad}\left( Y\right) }e^{\mathrm{ad}\left( X\right)
  	}H_{0}=-X+\left( H_{0}-\left[ Y,X\right] \right) +\left( Y-\frac{1}{2}\left[
  	Y,\left[ Y,X\right] \right] \right)
  	\end{equation*}%
  	and hence $f_{H}\left( e^{\mathrm{ad}\left( Y\right) }e^{\mathrm{ad} 	\left( X\right) }H_{0}\right) =\langle H,H_{0}-\left[ Y,X\right] \rangle$
  	is a degree $2$ polynomial in the variables $X$ and $Y$.

\begin{remark}
 	In further generality, given a complex Lie algebra,
	we may choose  a set of positive roots $\Pi ^{+}$ and simple roots $\Sigma \subset \Pi ^{+}$
with corresponding Weyl chamber is $\mathfrak{a}^{+}$.
	A subset $\Theta \subset \Sigma $ defines a parabolic subalgebra $%
\mathfrak{p}_{\Theta }$ with parabolic subgroup $P_{\Theta }$ and a flag manifold
$\mathbb{F}_{\Theta }=G/P_{\Theta }$.
	An element
$H_{\Theta }\in \mathrm{cl}\mathfrak{a}^{+}$ is characteristic
for $\Theta \subset \Sigma $ if $\Theta =\{\alpha \in \Sigma :\alpha \left(
H_{\Theta }\right) =0\}$.
Let
$Z_{\Theta }=\{g\in G:\mathrm{Ad}\left( g\right) H_{\Theta }=H_{\Theta
}\}$ be the centraliser in $G$ of the characteristic element $H_{\Theta }$.
The adjoint orbit 
$\mathcal{O}\left( H_{\Theta }\right)
=\mathrm{Ad}\left( G\right) \cdot H_{\Theta }\approx G/Z_{\Theta }$ 
of the characteristic element $H_{\Theta }$ is diffeomorphic to the cotangent bundle 
$T^{\ast }\mathbb{F}_{\Theta }$
\cite[Thm.\thinspace 2.1]{GGS2}.
\end{remark}

\begin{remark}\label{comp}
In the specific case of the minimal orbit, there is a very useful compactification to 
a product or projective spaces, which goes as follows.
The diagonal action of  $\mathrm{SL}\left( n+1,\mathbb C \right) $ in
 $\mathbb{P}^{n}\times \mathrm{Gr}\left(n, n+1\right)\simeq	\mathbb{P}^{n}\times \mathbb P^n $ has two orbits, 
	an open one and a closed one. The open orbit is isomorphic to the adjoint orbit
	$\mathcal O_n=\Ad\left( G\right) \cdot H_{\Theta }$ with $H_{\Theta }=%
	\Diag(n,-1,\ldots ,-1)$ and is formed by the pairs of transversal elements in
	$\mathbb{P}^{n}\times \mathrm{Gr}\left(n, n+1\right) $.
The closed orbit is isomorphic to the flag $F_{\Theta \cap
		\Theta ^{\ast }}$. Since $\Theta \cap \Theta ^{\ast }$ is the complement of 
	$\{\alpha _{12},\alpha _{n,n+1}\}$ it follows that $F_{\Theta \cap
		\Theta ^{\ast }}=F\left( 1,n\right) $.
\end{remark}

\section{Toric potential vs Lie potential}
 Cotangent bundles of projective spaces and their Lagrangian skeleta were studied in \cite{BGRS1} 
 from the viewpoint of their duality with bundles on  collars of local surfaces. 
Starting with a Hamiltonian action of $\mathbb T = \mathbb C^*$   on $T^\ast\mathbb{P}^n$ 
expressed in the open chart $V_0=\{x_0\neq 0\}$ by
$$\mathbb T \cdot V_0=
\left\{[1,t^{-1} x_1,\dots,t^{-n}x_n],(ty_1,\dots, t^ny_n)\right\}$$
 we obtain a Hamiltonian vector field $T^\ast\mathbb{P}^n$
\[
	X(x_1, \ldots, x_n, y_1, \ldots, y_n)
	= (-x_1,\dots,-nx_n,y_1,\dots, ny_n),
\]
as described in \cite[Prop.\thinspace 3.1]{BGRS1}.
 Taking $T^\ast\mathbb{P}^n$ with the canonical symplectic form $\omega$, 
the corresponding potential $h$ therefore satisfies 
\[
dh(Z)=\omega(X,Z),
\]
for all vector fields $Z \in \mathfrak{X}(T^\ast\mathbb{P}^n)$.
In $V_0$ coordinates, 
writing 
\[
{\bf x}= (x_1, \dots, x_n), 
{\bf y}= (y_1, \dots, y_n), 
{\bf a}= (a_1, \dots, a_n), 
{\bf b}= (b_1, \dots, b_n),
\]
we obtain a differential equation
\begin{align*}
\left(
\begin{matrix}
	\displaystyle\frac{\partial h}{\partial {\bf x}}   \displaystyle\frac{\partial h}{\partial {\bf y}} 
\end{matrix}
\right)
\left(
\begin{matrix}
	{\bf a}  \\ {\bf b} 
\end{matrix}
\right)
 & =
\sum_{i=1}^n dx_i \wedge dy_i \left( (-x_1, \ldots , -nx_n , y_1, \ldots , ny_n),
(  a_1,\ldots ,a_n , b_1,\ldots, b_n )\right) \\
 & = -2\sum ix_ib_i+iy_ia_i,
\end{align*}
whose solutions are:
\begin{equation}\label{toricpoteq}
{\bf h}_c = c -2x_1y_1 - \ldots -2nx_ny_n = c + \sum_{i=1}^n -2ix_iy_i,
\end{equation}
for $c \in \mathbb{C}$.
See \cite{BGRS1} for the expression of $h_c$ on the other charts. 
In Section \ref{Sensational} we will consider $n=1$, in which case there are two charts.

\begin{definition}\label{toricpot}
We call ${\bf h}_c$ a \textbf{toric potential} on $T^*\mathbb{P}^n$.
\end{definition}

\begin{theorem}\label{coinc} 
For each $n \in \mathbb{N}$, there exist matrices $H, H_0 \in \mathfrak{sl}\left(n+1,\mathbb{C}\right)$ and a constant $c \in \mathbb{C}$ such that the Lie potential on the minimal adjoint orbit $\mathcal{O}_n$ and the toric potential on the cotangent bundle $T^*\mathbb P^{n}$ coincide on dense open 
charts, that is  ${\bf f}_H = {\bf h}_c$.
\end{theorem}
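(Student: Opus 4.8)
The plan is to compare the two potentials as quadratic forms in the affine chart coordinates $(x_i,y_i)$ and to force their agreement by choosing the eigenvalues of $H$; the free constant $c$ then absorbs the leftover zeroth-order term. Writing $H=\Diag(\lambda_1,\dots,\lambda_{n+1})$ and keeping $H_0=\Diag(n,-1,\dots,-1)$ fixed by the definition of the minimal orbit, the boxed Lie potential of~\eqref{liepot} (normalised by the trace pairing) reads
\[
\mathbf f_H=\tr(HH_0)+\sum_{i=1}^{n}(\lambda_1-\lambda_{i+1})\,x_iy_i,
\]
whereas the toric potential of~\eqref{toricpoteq} reads $\mathbf h_c=c+\sum_{i=1}^{n}(-2i)\,x_iy_i$. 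Since both have the shape $(\text{constant})+\sum_i\gamma_i\,x_iy_i$, it suffices to match the $n$ quadratic coefficients and the single constant.

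First I would impose $\lambda_1-\lambda_{i+1}=-2i$ for $i=1,\dots,n$, equivalently $\lambda_{i+1}=\lambda_1+2i$. This leaves $\lambda_1$ as the only free parameter, which is pinned down by the trace-zero condition defining $\mathfrak{sl}(n+1,\mathbb C)$: summing the eigenvalues gives $(n+1)\lambda_1+n(n+1)=0$, so $\lambda_1=-n$ and
\[
H=\Diag(-n,\,-n+2,\,-n+4,\dots,n-2,\,n),\qquad \lambda_k=-n+2(k-1).
\]
Next I would verify that $H$ is admissible for the height-function construction: its diagonal entries form an arithmetic progression with common difference $2$, hence are pairwise distinct, so $\alpha(H)=\lambda_i-\lambda_j\neq0$ for every root $\alpha=e_i-e_j$ and $H$ is regular; moreover $\tr H=0$ places $H$ in $\mathfrak h_{\mathbb R}\subset\mathfrak{sl}(n+1,\mathbb C)$. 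Choosing $c=\tr(HH_0)=(n+1)\lambda_1=-n(n+1)$ then matches the constant terms, and $\mathbf f_H=\mathbf h_c$ term by term.

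The step that genuinely needs justification---the main obstacle---is that $\mathbf f_H$ and $\mathbf h_c$ are a priori written in unrelated coordinate systems: the $(x_i,y_i)$ in $\mathbf f_H$ parametrise the dense chart $N_{H_0}^{-}N_{H_0}^{+}\cdot H_0$ of $\mathcal O_n$ arising from the $\mathfrak n_{H_0}^{-}\times\mathfrak n_{H_0}^{+}$ decomposition, while the $(x_i,y_i)$ in $\mathbf h_c$ are the coordinates on the chart $V_0=\{x_0\neq0\}$ of $T^*\mathbb P^n$. To equate the two formulas I would invoke the diffeomorphism $\mathcal O_n\simeq T^*\mathbb P^n$ of \cite[Thm.\thinspace2.1]{GGS2} and check that, under it, one affine chart is carried to the other so that the two sets of coordinates $(x_i,y_i)$ are literally the same functions on the dense overlap. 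Granting this identification, the coefficient matching above yields $\mathbf f_H=\mathbf h_c$ on the common dense open chart, which is exactly the assertion.
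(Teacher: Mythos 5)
Your proposal is correct and follows essentially the same route as the paper: the paper simply posits $H_0=\Diag(n,-1,\dots,-1)$, $H$ equal to the arithmetic progression $\Diag(-n,-n+2,\dots,n-2,n)$ (written there as separate even/odd cases), and $c=-n^2-n$, then matches the coefficients $\lambda_1-\lambda_{j}=-2(j-1)$ against $-2i$ exactly as you do. Your extra remarks (deriving $\lambda_1=-n$ from the trace-zero condition, checking regularity of $H$, and flagging the identification of the two coordinate charts under the diffeomorphism $\mathcal O_n\simeq T^*\mathbb P^n$) go slightly beyond what the paper writes down, but do not change the argument.
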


\begin{proof}
Take 
$H_{0}=\Diag(n,-1,\ldots ,-1)\in \mathfrak{sl}\left(n+1,\mathbb{C}\right)$ ,

	\[
	H=\begin{cases}
 \Diag\left(-n, \ldots, -4, -2, 0, 2, 4, \ldots, n \right) 
\quad \textnormal{if} \,\, n \textnormal{ is even,}\\
 \Diag\left( -n, \ldots, -3, -1, 1, 3, \ldots, n \right)
\quad \textnormal{if} \,\, n \textnormal{ is odd}
 \end{cases}
	\] 
and $c=-n^2-n$.
	
On the Lie side, we have:
\[
\textbf{f}_H (H_0)= 
\mathrm{tr}(H H_{0}) + (
	\lambda_{1}-\lambda_{2}) x_{1}y_{1}+\cdots +( \lambda_{1}-\lambda_{n+1})
	x_{n}y_{n},
\]
where the eigenvalues of $H=\Diag(\lambda_1, \ldots, \lambda_{n+1})$ 
satisfy 
$\lambda_1 - \lambda_j = -2(j-1)$, 
so that
\[
\textbf{f}_H (H_0)= 
-n^2-n 
-2 x_{1}y_{1}
- \cdots 
- 2n x_{n}y_{n}.
\]

On the toric side,
\begin{align*}
{\bf h}_c & = c -2x_1y_1 - \ldots -2nx_ny_n \\
	& = -n^2-n 
	- 2x_1y_1 - \cdots - 2nx_ny_n.
\end{align*}
\end{proof}

By Theorem \ref{coinc} the value $c=-n^2-n$ is preferable for our purposes here.
 Therefore, using
coordinates $([1, x_1],y_1)$  for the cotangent bundle  $T^*\mathbb P^1$,
we write the potential 
$${\bf h}_{-2}\colon T^*\mathbb P^1\rightarrow \mathbb C$$
as
$${\bf h}_{-2}=- 2x_1y_1-2.$$

\begin{definition}
Two Landau--Ginzburg models are called {\bf birationally equivalent} if their domains are birationally equivalent varieties and their potentials coincide on Zariski open sets.
\end{definition}

A rigorous version of our theorem may be stated as:

\begin{theorem}\label{bit}
For each $n \in \mathbb{N}$, there exist matrices $H, H_0 \in \mathfrak{sl}\left(n+1,\mathbb{C}\right)$ and a constant $c \in \mathbb{C}$ such 
that the LG models $(\mathcal O_n, f_H)$ and $(T^*\mathbb P^{n}, h_c)$ are birationally equivalent.
\end{theorem}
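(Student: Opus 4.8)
The plan is to produce an explicit birational map $\phi$ between $\mathcal{O}_n$ and $T^*\mathbb{P}^n$ that identifies the two distinguished dense charts on which the potentials were computed, and then to read off the coincidence of the potentials directly from Theorem~\ref{coinc}. The guiding observation is that each variety carries a Zariski open dense subset isomorphic to $\mathbb{C}^{2n}$ with global coordinates $({\bf x},{\bf y})$, and that these are precisely the charts underlying the expressions ${\bf f}_H$ and ${\bf h}_c$.

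First I would pin down the two charts. On the orbit, the map $(Y,X)\in\mathfrak{n}_{H_0}^{-}\times\mathfrak{n}_{H_0}^{+}\mapsto e^{\mathrm{ad}(Y)}e^{\mathrm{ad}(X)}\cdot H_0$ parametrises the open dense cell $N_{H_0}^{-}N_{H_0}^{+}\cdot H_0\subset\mathcal{O}_n$; in the coordinates ${\bf x}=(x_1,\dots,x_n)$, ${\bf y}=(y_1,\dots,y_n)$ this cell is a copy of $\mathbb{C}^{2n}$, and its complement is the proper closed locus where the big-cell decomposition degenerates, so the cell is Zariski open and dense in the irreducible variety $\mathcal{O}_n$. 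On the cotangent bundle, the affine chart $V_0=\{x_0\neq0\}$ with coordinates $(x_1,\dots,x_n,y_1,\dots,y_n)$ is likewise Zariski open and dense, its complement being the cotangent fibres over the hyperplane $\{x_0=0\}$, a subvariety of codimension one.

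Next I would take $\phi$ to be the map that is the identity on coordinates, $({\bf x},{\bf y})\mapsto({\bf x},{\bf y})$, carrying the orbit cell isomorphically onto $V_0$. Because both cells are Zariski dense and $\phi$ restricts there to an isomorphism of affine spaces, $\phi$ is a birational equivalence of varieties, establishing the first half of the definition. For the second half, with the matrices $H,H_0$ and the constant $c=-n^2-n$ fixed in Theorem~\ref{coinc}, that theorem gives ${\bf f}_H={\bf h}_c$ as functions on these charts; since ${\bf f}_H$ and ${\bf h}_c$ are the coordinate expressions of $f_H$ and $h_c$, this says exactly that $f_H=h_c\circ\phi$ on a Zariski open set. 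The two requirements of the definition of birational equivalence of LG models are then both met.

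The main obstacle is the algebraic bookkeeping certifying that the Lie cell is Zariski open (not merely open in the analytic topology) and that the parametrisation $(Y,X)\mapsto e^{\mathrm{ad}(Y)}e^{\mathrm{ad}(X)}\cdot H_0$ is a morphism of varieties whose inverse on its image is rational, so that $\phi$ is genuinely birational rather than just a diffeomorphism of the type supplied by \cite[Thm.\thinspace2.1]{GGS2}. Once this is secured the coincidence of potentials needs no further computation, being already contained in Theorem~\ref{coinc}; one should only confirm that the two sets of coordinates named $({\bf x},{\bf y})$ are the same ones used in deriving (\ref{liepot}) and (\ref{toricpoteq}), which is how the variables were introduced in both derivations.
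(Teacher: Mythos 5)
Your proposal is correct and follows the same route as the paper: the paper's entire proof is a one-line appeal to Theorem~\ref{coinc}, leaving the birational identification of the big cell $N_{H_0}^{-}N_{H_0}^{+}\cdot H_0\subset\mathcal O_n$ with the chart $V_0\subset T^*\mathbb P^n$ implicit, whereas you spell that map out explicitly. The point you flag as the remaining obstacle is standard (the big cell is Zariski open and the exponential parametrisation by nilpotent elements is polynomial with polynomial inverse), so nothing essential is missing.
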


\begin{proof}
By Theorem \ref{coinc}, the expressions of ${\bf f}_H$ and ${\bf h}_c$ for the potentials $f_H$ and $h_c$ coincide on Zariski open sets. 
\end{proof}

\section{Deformations and their mirrors} \label{Sensational}

\begin{example}[$\LG_0 \longrightsquigarrow \LG_1$] \label{0to1}
We consider $T^*\mathbb P^1$ with coordinates $[1,x],y$ and the family of potentials 
$$\LG_t = t\left(x+\frac{1}{x}+ \frac{y^2}{x}\right)+(1-t) (2x).$$
Then we have that the initial LG model 
$LG_0$ is selfdual as shown in Example \ref{selfdual}, while the final LG model 
is defined by $\LG_1 = (T^*\mathbb P^1,2x)$ which has the toric data
	$$\Div= \left(\!\!\!
	\begin{array}{rr} 
	1& 0 \\
	-1 &2\\
	0&1
	\end{array} \!\right), 
	\quad 
	\Mon = (1 \, 0).$$
Now, inverting the matrices by toric duality gives us the $\LG_1^\vee$ model 
	$$\Div=(1 \, 0) 
	\quad 
	\Mon = \left( \!\!\!
	\begin{array}{rr} 
	1& 0 \\ 
	-1 &2\\
	0&1
	\end{array}\!\!\right)$$
defined on a variety with torus $(\mathbb C^*)^2$ and a single divisor, in this case a $\mathbb C^* \times \mathbb C$.
We conclude that this family takes a selfdual LG model to another very far from selfdual.

$$\begin{array}{clc}\LG_0& \longrightsquigarrow &\LG_1\\
\rotatebox[origin=c]{90}{=} & &\not\sim\\
\LG_0^\vee& \longrightsquigarrow &\LG_1^\vee .
\end{array}$$

\end{example}

Our next objective is to describe how duality works for the deformation family 
$$\LG_1= (T^*\mathbb P^1,h_c) \longrightsquigarrow \LG_2= (\mathcal O_2, f_H).$$
We will use the deformation of the Hirzebruch surfaces $\mathbb F_2$ to $\mathbb F_0$ in Lemma \ref{deform}, 
extending it to a deformation of partially compactified Landau--Ginzburg models in Example \ref{partial}
$$(\mathbb F_2,\bf h) \longrightsquigarrow (\mathbb F_0,\mathbf f).$$

The Hirzebruch surface $\mathbb F_2$ embeds in  $\mathbb P^1 \times \mathbb P^3$,
 as  the set of points 
  $$([x_0,x_1], [y_0, y_1, y_2, y_3])  \in \mathbb P^1 \times \mathbb P^3 $$ 
satisfying the equation
\[
x_0 (y_1, y_2) = x_1 (y_2, y_3), 
\]
see \cite{Ma}. 
We can cover $\mathbb{F}_2$ with four coordinate charts, 
$U = \{ (z, u) \in \mathbb C^2 \}$, 
$V = \{ (\xi, v) \in \mathbb C^2 \}$, 
$U' = \{ (z, \mu) \in \mathbb C^2 \}$,
$V = \{ (z, \eta) \in \mathbb C^2 \}$
with the following changes of coordinates: 
\[
\xi = z^{-1}, \quad v = z^2u, \quad \mu = u^{-1}, \quad \eta = v^{-1}.
\]

 We will make use of the following result:
\begin{lemma}\label{deform}
$\mathbb{F}_2$ deforms to $\mathbb{F}_0$.
\end{lemma}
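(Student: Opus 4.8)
The plan is to exhibit both surfaces as members of a single algebraic family of $\mathbb{P}^1$-bundles over $\mathbb{P}^1$, obtained by switching on the unique nontrivial first-order deformation of $\mathbb{F}_2$. Concretely, I would keep the four charts introduced above but glue the fibre coordinates by the $t$-dependent rule
\[
\xi = z^{-1}, \qquad v = z^2 u + t\,z,
\]
so that $t=0$ recovers the $\mathbb{F}_2$ transition $v = z^2 u$. For every fixed $t$ this is still the clutching of a $\mathbb{P}^1$-bundle, and letting $t$ range over a disk $D \ni 0$ yields a proper smooth morphism $\mathcal{X} \to D$ with central fibre $\mathcal{X}_0 = \mathbb{F}_2$. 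Flatness and smoothness of $\mathcal{X}\to D$ are automatic, since it is a $\mathbb{P}^1$-bundle over $\mathbb{P}^1\times D$; the real content of the lemma is to identify the nearby fibres $\mathcal{X}_t$ for $t\neq 0$.

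To do this I would pass from the surface to the underlying rank-two bundle. Written projectively on the fibre, the gluing above is the projectivisation of the clutching cocycle
\[
g_t(z) = \begin{pmatrix} 1 & 0 \\ t z & z^2 \end{pmatrix}
\]
of a rank-two bundle $E_t$ on $\mathbb{P}^1$, so that $\mathcal{X}_t = \mathbb{P}(E_t)$. Since $\mathbb{P}(E)\cong\mathbb{P}(E\otimes L)$ for any line bundle $L$, it suffices to pin down the splitting type of $E_t$. The cocycle presents $E_t$ as an extension
\[
0 \longrightarrow \mathcal{O} \longrightarrow E_t \longrightarrow \mathcal{O}(2) \longrightarrow 0,
\]
whose class in $\mathrm{Ext}^1(\mathcal{O}(2),\mathcal{O}) = H^1(\mathbb{P}^1,\mathcal{O}(-2)) \cong \mathbb{C}$ is $t$ times a fixed generator. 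At $t=0$ the extension splits, giving $E_0 = \mathcal{O}\oplus\mathcal{O}(2)$ and hence $\mathcal{X}_0 = \mathbb{P}(\mathcal{O}\oplus\mathcal{O}(2)) = \mathbb{F}_2$, as expected.

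The crux is to show that for $t\neq 0$ the nonsplit bundle $E_t$ is the \emph{balanced} one, $E_t \cong \mathcal{O}(1)\oplus\mathcal{O}(1)$, whence $\mathcal{X}_t = \mathbb{P}(\mathcal{O}(1)^{\oplus 2}) = \mathbb{P}(\mathcal{O}^{\oplus 2}) = \mathbb{P}^1\times\mathbb{P}^1 = \mathbb{F}_0$. I would detect this jump by a cohomology computation: twisting the defining sequence by $\mathcal{O}(-2)$ and taking the long exact sequence, the connecting map $H^0(\mathcal{O}) \to H^1(\mathcal{O}(-2))$ is cup product with the extension class, hence multiplication by $t$; for $t\neq 0$ it is an isomorphism, forcing $H^0(E_t(-2)) = 0$. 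As $(\mathcal{O}\oplus\mathcal{O}(2))\otimes\mathcal{O}(-2)$ has a global section while $\mathcal{O}(1)^{\oplus 2}\otimes\mathcal{O}(-2) = \mathcal{O}(-1)^{\oplus 2}$ has none, the vanishing rules out the unbalanced type and identifies $E_t \cong \mathcal{O}(1)^{\oplus 2}$ for $t\neq 0$. This semicontinuity/connecting-map step, which verifies that the complex structure genuinely jumps rather than staying of $\mathbb{F}_2$-type, is the main obstacle; the remaining points — the properness and smoothness of $\mathcal{X}\to D$ and the two fibrewise identifications — are then routine.
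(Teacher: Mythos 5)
Your proof is correct, and it starts from the same gluing data as the paper: the transition rule $v = z^2u + tz$ is exactly the one the paper uses (it is implicit in its chart parametrisations $(z,u,t)\mapsto([1,z],[1,z^2u+tz,zu,u],t)$ and $(\xi,v,t)\mapsto([\xi,1],[1,v,\xi v-t,\xi^2v-t\xi],t)$). Where you diverge is in how the family is packaged and in what is actually verified. The paper realises the total space extrinsically, as the subvariety of $\mathbb{P}^1\times\mathbb{P}^3\times\mathbb{C}$ cut out by $x_0y_1=x_1y_2+tx_1y_0$ and $x_0y_2=x_1y_3$, and essentially stops there; the identification of the fibres ($\mathbb{F}_2$ at $t=0$, $\mathbb{F}_0$ for $t\neq 0$) is asserted rather than proved, and the explicit projective model is what gets reused later (Lemma \ref{deform.fam}, the section at infinity, the partial compactification). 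You instead work intrinsically with the ruled-surface structure: $\mathcal{X}_t=\mathbb{P}(E_t)$ for a rank-two bundle $E_t$ presented as an extension of $\mathcal{O}(2)$ by $\mathcal{O}$ with class $t$ times a generator of $H^1(\mathbb{P}^1,\mathcal{O}(-2))\cong\mathbb{C}$, and you detect the jump of splitting type via the connecting map $H^0(\mathcal{O})\to H^1(\mathcal{O}(-2))$, which is multiplication by $t$. This supplies precisely the verification the paper omits, so your argument is the more complete of the two, at the cost of losing the explicit projective embedding that the paper exploits afterwards. One small point to make explicit: Grothendieck's theorem only gives $E_t\cong\mathcal{O}(a)\oplus\mathcal{O}(2-a)$ for some $a\le 1$ a priori, so you should note that the vanishing $h^0(E_t(-2))=0$ excludes every $a\le 0$ (since then $h^0(\mathcal{O}(-a))=1-a>0$), not only the split type $a=0$; with that one line added, the identification $E_t\cong\mathcal{O}(1)^{\oplus 2}$ for $t\neq 0$ is airtight.
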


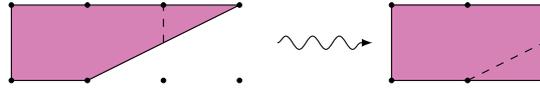
\begin{figure}[h]
\centering
\label{f2tof0}

\begin{tikzpicture}
\begin{scope}[shift={(0,0)}] 
\filldraw[Thistle] (0,1) -- ++(0,-1) -- ++(1,0) -- ++(2,1);

\foreach \x in {0,...,3}
		{	\foreach \y in {0,...,1}
			{
			\filldraw (\x,\y) circle (0.03cm);
			}
		}

\draw (0,1) -- ++(0,-1) -- ++(1,0) -- ++(2,1) -- (0,1);
\draw[dashed] (2,1/2) -- ++(0,1/2);
\end{scope}

\begin{scope}[shift={(5,0)}] 
	\filldraw[Thistle] (0,0) -- ++(2,0) -- ++(0,1) -- ++(-2,0) -- ++(0,-1);
	\foreach \x in {0,...,2}
		{
		\foreach \y in {0,...,1}
			{
			\filldraw (\x,\y) circle (0.03cm);
			}
		}
	\draw (0,0) -- ++(2,0) -- ++(0,1) -- ++(-2,0) -- ++(0,-1);
	\draw[dashed] (1,0) -- ++(1,1/2);
\end{scope}

\draw[->, -latex, decorate, decoration={snake, post length=1pt}]
(3.5,0.5) -- ++(1.25,0); 
\end{tikzpicture}

\caption{$\mathbb{F}_2$ deforms to $\mathbb{F}_0$}
\end{figure}

\begin{proof}
The deformation family $\mathcal{M}$ taking $\mathbb{F}_2$ to $\mathbb{F}_0$ is given by 
$$\{x_0 (y_1, y_2) = x_1 (y_2 + t y_0, y_3)\} = M_t \subset \mathbb{P}^1 \times \mathbb{P}^3. $$
In coordinates:
\begin{align*}
(z, u, t)		&\mapsto ([1, z], [1, z^2 u + t z, zu, u], t) \\
(z, \mu, t)		&\mapsto ([1, z], [\mu, z^2 + tz\mu, z, 1], t) \\
(\xi, v, t)		&\mapsto ([\xi, 1], [1, v, \xi v - t, \xi^2 v - t \xi], t). \\
(\xi, \eta, t)	&\mapsto ([\xi, 1], [\eta, 1, \xi - t\eta, \xi^2 - t \xi\eta], t). 
\end{align*}

The deformation from $\mathbb{F}_2$ to $\mathbb{F}_0$ is given by 
$$\{x_0 (y_1, y_2) = x_1 (y_2 + t y_0, y_3)\} = M_t \subset \mathbb{P}^1 \times \mathbb{P}^3, $$
\begin{align*}
x_0y_1 & = x_1y_2 + tx_1y_0 \\
x_0y_2 & = x_1y_3
\end{align*}

\end{proof}

\begin{lemma} \label{deform.fam}
$T^*\mathbb P^1$ deforms to $\mathcal O_2$.
\end{lemma}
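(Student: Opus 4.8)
The plan is not to build a new family but to cut the deformation $\mathbb{F}_2 \longrightsquigarrow \mathbb{F}_0$ of Lemma~\ref{deform} down to a dense open family. Recall that $T^*\mathbb{P}^1 = \mathcal{O}_{\mathbb{P}^1}(-2)$ sits inside $\mathbb{F}_2$ as the complement of the section at infinity $C_\infty$ (the $+2$ section); in the charts of Lemma~\ref{deform} this section is the locus $\{y_0 = 0\}$, covered by $U' = \{(z,\mu)\}$ and $V' = \{(\xi,\eta)\}$ at $\mu = \eta = 0$, so that its complement is exactly the union of the two finite-fibre charts $U = \{(z,u)\}$ and $V = \{(\xi,v)\}$. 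Because $t$ enters the defining equations of $M_t$ only through the term $t x_1 y_0$, it drops out on $\{y_0 = 0\}$, and $C_\infty = M_t \cap \{y_0=0\}$ is therefore one and the same smooth rational curve for every $t$ (in the chart $U'$ it is $\{\mu = 0\}$). I would thus set $\mathcal{U}_t = M_t \setminus C_\infty = U \cup V$ and prove that $\mathcal{U}_0 = T^*\mathbb{P}^1$ while $\mathcal{U}_t \cong \mathcal{O}_2$ for $t \neq 0$, the minimal orbit being realised, as in Remark~\ref{comp}, as the complement of the closed orbit inside $\mathbb{P}^1 \times \mathbb{P}^1 = \mathbb{F}_0$.

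The crux is to read the gluing of $\mathcal{U}_t = U \cup V$ off the parametrisations of Lemma~\ref{deform} and to produce global coordinates. Matching the two images in $\mathbb{P}^1 \times \mathbb{P}^3$ yields the transition
\[
\xi = z^{-1}, \qquad v = z^2 u + t z,
\]
which for $t = 0$ is the standard gluing of $\mathcal{O}_{\mathbb{P}^1}(-2)$, establishing $\mathcal{U}_0 = T^*\mathbb{P}^1$. For $t \neq 0$ I would introduce the functions $P = u$, $Q = zu$, $R = z^2u + tz$ on $U$; using the transition one checks these are globally regular (on $V$ they become $P = \xi^2 v - t\xi$, $Q = \xi v - t$, $R = v$) and satisfy the single relation
\[
P R = Q^2 + t Q.
\]
For $t \neq 0$ this is the equation of a smooth affine quadric, which is precisely the minimal adjoint orbit $\mathcal{O}_2$; completing the square turns it into $(Q + t/2)^2 - PR = t^2/4$, manifestly smooth.

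The step I expect to cost the most is verifying that $(P,Q,R)\colon \mathcal{U}_t \to \{PR = Q^2 + tQ\}$ is an isomorphism for $t \neq 0$, and not merely a morphism into the quadric. The decisive point is injectivity along the zero section $\{u = 0\}$ of $U$: there $(P,Q,R) = (0,0,tz)$, so the coordinate $z$ is recovered as $R/t$ precisely because $t \neq 0$, and a symmetric check on $V$ together with surjectivity finishes the identification with $\mathcal{O}_2$. It is illuminating that the same map at $t = 0$ collapses $\{u=0\}$, the $(-2)$-curve, to the node of the cone $\{Q^2 = PR\}$, exhibiting $\mathcal{U}_0 \to \{Q^2 = PR\}$ as the minimal resolution of the $A_1$-singularity; this simultaneous-resolution picture explains geometrically why the central fibre is the smooth $T^*\mathbb{P}^1$ while the nearby fibres are the smooth affine orbit. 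Recording that $\mathcal{U} = \mathcal{M} \setminus C_\infty \to D$ is a smooth family with these fibres then gives the desired deformation $T^*\mathbb{P}^1 \longrightsquigarrow \mathcal{O}_2$.
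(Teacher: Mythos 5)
Your proposal is correct and follows the same overall strategy as the paper: both proofs start from the family $\mathcal{M} \to \mathbb{C}$ of Lemma \ref{deform} and realise the desired family as the complement of the ($t$-independent) section at infinity, i.e.\ the union of the two finite charts $U$ and $V$ glued by $(\xi,v) = (z^{-1}, z^2u+tz)$. The difference lies in where the identification of the nearby fibres with the orbit comes from. The paper takes as given that the orbit ``may be written in coordinate charts'' $U$, $V$ with exactly this transition function, then exhibits the embedding $j\colon M \to \mathcal{M}$ and verifies (via the Claim) that the complement of its image is the section at infinity. You argue in the opposite direction: starting from $\mathcal{M} \setminus C_\infty$ you construct the global regular functions $P = u$, $Q = zu$, $R = z^2u+tz$, show they satisfy $PR = Q^2+tQ$, and prove that for $t \neq 0$ this gives an isomorphism onto the smooth affine quadric, which is isomorphic to $\{x^2+yz=1\}$, i.e.\ the orbit; the decisive step, as you note, is recovering $z = R/t$ along $\{u=0\}$, which is exactly what fails at $t=0$, where instead the $(-2)$-curve is collapsed to the vertex of the cone $\{Q^2 = PR\}$. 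This makes your proof self-contained where the paper's relies on an asserted chart description of the orbit, and your simultaneous-resolution remark is a genuine (and correct) addition explaining why the central fibre is $T^*\mathbb{P}^1$ while nearby fibres are affine. One notational caveat: the lemma and the paper's proof write $\mathcal{O}_2$, but in the paper's own notation (Definition \ref{minimal}, Example \ref{ex2}) the surface orbit in question is $\mathcal{O}_1 \subset \mathfrak{sl}(2,\mathbb{C})$; your quadric is that orbit, so your argument proves the statement as intended.
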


\begin{figure}[h]
\centering
\begin{tikzpicture}
\begin{scope}[shift={(0,0)}] 
\filldraw[Thistle] (0,1) -- ++(0,-1) -- ++(1,0) -- ++(2,1);

\foreach \x in {0,...,3}
		{	\foreach \y in {0,...,1}
			{
			\filldraw (\x,\y) circle (0.03cm);
			}
		}

\draw (0,1) -- ++(0,-1) -- ++(1,0) -- ++(2,1);

\foreach \i in {0,...,3}
	{
	\filldraw[white] (\i,1) circle (0.02cm);
	}	
\end{scope}

\begin{scope}[shift={(5,0)}] 
	\filldraw[Thistle] (0,0) -- ++(2,0) -- ++(0,1) -- ++(-2,0) -- ++(0,-1);
	\foreach \x in {0,...,2}
		{
		\foreach \y in {0,...,1}
			{
			\filldraw (\x,\y) circle (0.03cm);
			}
		}
	\draw (0,1) -- ++(0,-1) -- ++(1,0);
	\draw (2,0) -- ++(0,1);
\foreach \i in {0,...,3}
	{
	\filldraw[white] (\i,1) circle (0.02cm);
	}
	\filldraw[white] (2,0) circle (0.02cm);

\end{scope}
\begin{scope}[shift={(9,0)}] 
	\filldraw[Thistle] (0,0) -- ++(2,0) -- ++(0,1) -- ++(-2,0) -- ++(0,-1);
	\foreach \x in {0,...,2}
		{
		\foreach \y in {0,...,1}
			{
			\filldraw (\x,\y) circle (0.03cm);
			}
		}
	\draw (0,0) -- ++(2,0) -- ++(0,1) -- ++(-2,0) -- ++(0,-1);
\draw[dashed, white] (0,0) -- (2,1);
	\filldraw[white] (0,0) circle (0.02cm);
	\filldraw[white] (2,1) circle (0.02cm);
\end{scope}

\draw[->, -latex, decorate, decoration={snake, post length=1pt}] 
(3.5,0.5) -- ++(1.25,0); 

\draw[->, -latex, decorate, decoration={snake, post length=1pt}] 
(7.5,0.5) -- ++(1.25,0); 

\end{tikzpicture}
\label{tp1to02}
\caption{$T^*\mathbb{P}^1$ deforms to $\mathcal{O}_2$}
\end{figure}
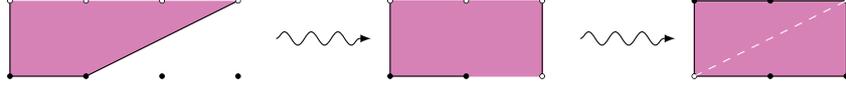

\begin{proof}
We will use the family 
 $\pi \colon \mathcal{M} \to \mathbb C$ given in Lemma \ref{deform} deforming the Hirzebruch surface $\mathbb F_2$ to $\mathbb F_0$,
  written as the points $([x_0,x_1], [y_0, y_1, y_2, y_3], t) \in \mathbb P^1 \times \mathbb P^3 \times \mathbb C$ satisfying the equation
\[
x_0 (y_1, y_2) = x_1 (y_2 + t y_0, y_3).
\]
	Here $\pi^{-1} (0) \simeq \mathbb F_2$ and for $t \neq 0$ we have that $\pi^{-1} (t) \simeq \mathbb F_0 \simeq \mathbb P^1 \times \mathbb P^1$.

The orbit $\mathcal O_2 $ may be written in coordinate charts as $U = \{ (z, u) \in \mathbb C^2 \}$ and $V = \{ (\xi, v) \in \mathbb C^2 \}$ with transition function
\[
(\xi, v) \mapsto (z^{-1}, z^2 u + t z),
\]
where we have $z= \frac{x_1}{x_0}$, $\xi = \frac{x_0}{x_1}$, $u=y_3$, and $v=y_1$.
The family $M \to \mathbb C$ deforming $T^*\mathbb P^1$ to $\mathcal O_2 $ admits an embedding 
\begin{equation}\label{uno}{j}\colon M \to \mathcal M\end{equation} given by
\begin{align*}
(z, u, t) &\mapsto ([1, z], [1, z^2 u + t z, z u, u], t) \\
(\xi, v, t) &\mapsto ([\xi, 1], [1, v, \xi v - t, \xi^2 v - t \xi], t).
\end{align*}
Note that in these coordinates,   $t=0$ and $y_1=y_2=y_3=0$ give the zero section of $T^*\mathbb P^1$.\\

\begin{claim}\label{infsection} The section at infinity $\sigma_\infty$ is:
\begin{align*}
\sigma_\infty(z,t) &= ([1, z], [0,z^2,z,1], t)\quad \text{on}\quad  U, \\
\sigma_\infty(\xi,t) &= ([\xi, 1], [0,1,\xi,\xi^2], t)\quad \text{on} \quad V, 
\end{align*}
\end{claim}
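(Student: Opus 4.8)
The plan is to read $\sigma_\infty$ off as the fiberwise limit at infinity of the embedded family $j(M)\subset\mathcal M$. Recall from the construction that in the chart $U=\{(z,u)\}$ the coordinate $z=x_1/x_0$ parametrises the base $\mathbb P^1$ while $u=y_3$ is the fibre coordinate of the (deformed) cotangent bundle, and that the zero section is cut out by $u=0$; the section at infinity is by definition the locus obtained by adjoining to each compactified fibre the point $u=\infty$, as $z$ (equivalently $\xi=z^{-1}$) varies. Thus I would define $\sigma_\infty$ chartwise as $\lim_{u\to\infty} j(z,u,t)$ on $U$ and $\lim_{v\to\infty} j(\xi,v,t)$ on $V$, and then verify that these limits exist, glue, and land in $\mathcal M$.

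First I would compute the limit in $U$. Since $j(z,u,t)=([1,z],[1,z^2u+tz,zu,u],t)$ and the middle factor lives in $\mathbb P^3$, I rescale the homogeneous coordinates by $u^{-1}$, obtaining $[u^{-1},z^2+tzu^{-1},z,1]$; letting $u\to\infty$ with $z,t$ fixed gives $[0,z^2,z,1]$, so $\sigma_\infty(z,t)=([1,z],[0,z^2,z,1],t)$. The identical computation in $V$, starting from $j(\xi,v,t)=([\xi,1],[1,v,\xi v-t,\xi^2 v-t\xi],t)$ and rescaling the $\mathbb P^3$ coordinates by $v^{-1}$, yields $[0,1,\xi,\xi^2]$ in the limit $v\to\infty$, hence $\sigma_\infty(\xi,t)=([\xi,1],[0,1,\xi,\xi^2],t)$. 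These are exactly the two expressions in the statement.

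Next I would check that $\sigma_\infty$ is a genuine section of $\pi\colon\mathcal M\to\mathbb C$, i.e. that each value satisfies the defining equations $x_0y_1=x_1y_2+tx_1y_0$ and $x_0y_2=x_1y_3$ of $M_t\subset\mathbb P^1\times\mathbb P^3$. On $U$, with $(x_0,x_1)=(1,z)$ and $(y_0,y_1,y_2,y_3)=(0,z^2,z,1)$, both equations reduce to $z^2=z^2$ and $z=z$; on $V$, with $(x_0,x_1)=(\xi,1)$ and $(y_0,y_1,y_2,y_3)=(0,1,\xi,\xi^2)$, they reduce to $\xi=\xi$ and $\xi^2=\xi^2$. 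Note that the vanishing of $y_0$ shows $\sigma_\infty$ lies in the complement $\mathcal M\setminus j(M)$ (where one may normalise $y_0\neq 0$), confirming it is the divisor added at infinity rather than part of $T^*\mathbb P^1$.

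Finally I would verify compatibility on the overlap $U\cap V$, where $\xi=z^{-1}$. Substituting into the $V$-expression gives $([z^{-1},1],[0,1,z^{-1},z^{-2}],t)$, and multiplying the $\mathbb P^1$ coordinates by $z$ and the $\mathbb P^3$ coordinates by $z^2$ returns exactly $([1,z],[0,z^2,z,1],t)$, the $U$-expression; so the two charts define a single section. The computation is elementary throughout, and the only point requiring care is the interpretation of the limit in projective coordinates: one must confirm that after rescaling the limit is a well-defined point (no collapse to $[0,0,0,0]$) and independent of the rescaling chosen, which the explicit forms above make manifest.
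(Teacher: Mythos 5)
Your proof is correct and follows essentially the same route as the paper's: rescale the $\mathbb{P}^3$ coordinates by $u^{-1}$ (resp.\ $v^{-1}$) and take the limit $u,v\to\infty$ to obtain $[0,z^2,z,1]$ and $[0,1,\xi,\xi^2]$. The additional checks you include --- that the limit points satisfy the defining equations of $M_t$ and that the two chart expressions glue on $U\cap V$ --- are correct and go slightly beyond what the paper writes down, but the core argument is identical.
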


\noindent{\it Proof of the claim}. Note that
\begin{align*}
[1, z^2 u + t z, z u, u] & = [1/u, z^2 + tz/u, z, 1] \\
[1, v, \xi v - t, \xi^2 v - t \xi] & = [1/v, 1, \xi -t/v, \xi^2 -t\xi/v] 
\end{align*}
So, by taking the limit when $u \to \infty$ and $v \to \infty$, i.e., $1/u \to 0$ and $1/v \to 0$ we obtain the section at infinity:
\begin{align*}
\lim_{u \rightarrow \infty}(z, u, t) & \mapsto ([1, z], [0,z^2,z,1], t), \\
\lim_{v \rightarrow \infty}(\xi,v, t) &\mapsto ([\xi, 1], [0,1,\xi,\xi^2], t).
\end{align*}

\end{proof}

\begin{example} \label{partial}($\LG_1\longrightsquigarrow \LG_2$)
We now revisit the family 
   $\LG_1\longrightsquigarrow \LG_2$ presented in Example \ref{ex2}, commenting on the dual family.

\begin{figure}[h]
\centering
\begin{tikzpicture}

\filldraw[Thistle] (0,1) -- ++(0,-1) -- ++(1,0) -- ++(2,1);

\foreach \x in {0,...,3}
		{	\foreach \y in {0,...,1}
			{
			\filldraw (\x,\y) circle (0.03cm);
			}
		}

\draw (0,1) -- ++(0,-1) -- ++(1,0) -- ++(2,1);

\foreach \i in {0,...,3}
	{
	\filldraw[white] (\i,1) circle (0.02cm);
	}	

\begin{scope}[shift={(5,0)}] 
	\filldraw[Thistle] (0,0) -- ++(2,0) -- ++(0,1) -- ++(-2,0) -- ++(0,-1);
	\foreach \x in {0,...,2}
		{
		\foreach \y in {0,...,1}
			{
			\filldraw (\x,\y) circle (0.03cm);
			}
		}
	\draw (0,0) -- ++(2,0) -- ++(0,1) -- ++(-2,0) -- ++(0,-1);
\draw[dashed, white] (0,0) -- (2,1);
	\filldraw[white] (0,0) circle (0.02cm);
	\filldraw[white] (2,1) circle (0.02cm);
\end{scope}

\draw[->, -latex, decorate, decoration={snake, post length=1pt}]
(3.5,0.5) -- ++(1.25,0);

\draw[->, -latex, decorate, decoration={snake, post length=1pt}]
(2.5,-1) -- ++(1.25,0); 

\coordinate (A) at (0.5,-1);
\coordinate (B) at (5.5,-1);

\node at (A) {$2x$};
\node at (B) {$2x$};

\end{tikzpicture}
\caption{\sc $\LG_1$ to $ \LG_2$}
\end{figure}

Using Definition \ref{toricpot} with $c=-2$, $n=1$ and coordinates
 $[1, x_1],y_1$
 for the cotangent bundle 
 $T^*\mathbb P^1$,
the potential 
${\bf h}_{-2}\colon T^*\mathbb P^1\rightarrow \mathbb{C}$
can be written as
$$
{\bf h}_{-2}([1, x_1],y_1)=- 2x_1y_1-2, 
\quad {\bf h}_{-2}([0, 1],v_1) = -2.
$$
As seen in Example \ref{0to1}, toric duality gives the dual $\LG_1^\vee$ 
as defined over a variety having a single divisor with a potential formed
by 3 monomials, hence not isomorphic to $\LG_1$.\\

We now discuss  the dual of the Landau--Ginzburg model 
$\LG_2 = (\mathcal O_1, f_H),$ 
where 
$\mathcal O_1$ is the semisimple adjoint orbit of $\mathfrak{sl}(2,\mathbb C)$, 
together with the potential $f_H$ given by the height function $f_H(x,y,z) =2x$. 
To start with, toric duality can not be applied in this case, because the orbit $\mathcal O_1$ 
is not a toric variety. So, an entire new recipe is needed. 

The recipe for intrinsic mirrors by Gross and Siebert \cite{GS} produces the desired dual Landau--Ginzburg model, which was described in \cite{G}. We summarise the construction. 
 
The compactification mentioned in Remark \ref{comp}, applied to 
 the case when $n=1$ gives  $\mathbb P^1\times \mathbb P^1 = \mathcal O_1 \cup \Delta$
where $\Delta $ is the diagonal.	
To construct the intrinsic mirror, we start with the pair $(\mathbb P^1\times \mathbb P^1,\Delta)$.
Since Gross and Siebert work under the hypothesis of log geometry, 
we need a pair $(X,D)$, where 
$D$ is an anticanonical divisor.
In this case, 
 an anticanonical divisor has type $(2,2)$. 
So, we choose  an additional divisor 
$\Delta'$ of type $(1,1)$ such that $\Delta \cap \Delta'= \{p_1,p_2\}$ consists of 2 points. 

Let $\widetilde{X}$ denote the blow up of $\mathbb P^1\times \mathbb P^1$
at these 2 points.
We then study the pair 
$$(\widetilde{X},D) \qquad \text{with}  \qquad D= D_1+D_2+ D_3 + D_4$$
where $D_2$ and $D_4$ are the proper transforms of 
$\Delta'$  and $\Delta$ respectively, and $D_1$ and $D_3$ are the exceptional curves obtained from blowing
up the 2 points; thus
$D_1^2=D_3^2=-1.$		

Calculating theta functions following the Gross--Siebert intrinsic mirror symmetry recipe, 
\cite{G}  arrives at the variety given by
\begin{equation}\label{mir}
\vartheta_2\vartheta_4=\alpha\vartheta_1+\beta\vartheta_1^{-1}+\gamma
\end{equation}
with potential $\vartheta_2$.

Changing to more standard algebraic coordinates
 $ x \ce \vartheta_1, y \ce \vartheta_2$ and compactifying the direction $\vartheta_4$ to $\mathbb P^1$ 
 with coordinates $[u:v]$,
our problem is then to study the surface 
$S \subset \mathbb C^*\times \mathbb C\times \mathbb P^1 = \{(x,y),[u:v]\}$  given by
$$S\ce \{uy=v(x+1+1/x)\}$$
with potential $$y=v(x+1+1/x)/u.$$

The Landau--Ginzburg model $\LG_2^\vee = (S, y) $ is the dual of $\LG_2$.
Denoting by $D_{sg}\LG_2^\vee$ the Orlov category of singularities of $\LG_2^\vee$, \cite[Sec.\thinspace 3.3]{G}  proves the 
equivalence 
$$D_{sg}\LG_2^\vee \equiv \Fuk \LG_2.$$

\end{example}

Note that here, $\LG_2^\vee$ is quite different from $\LG_2$, for instance, the potential $y$ has the 2 singularities 
of $\LG_2^\vee$ occurring on the same fibre of the potential, unlike the potential $2x$ of $\LG_2$ 
for which the singularities occur in different fibres, as described in Figure \ref{sphere}.\\

So that the mirror of the Landau--Ginzburg model geometrically resembles the rotation by 90 degrees of the original one.\\

\begin{figure}\label{sphere}
\centering
\begin{tikzpicture}
\shade[top color=Aquamarine, bottom color=Salmon] (4,2) circle (2cm);
\draw (4,2) ellipse (2cm and .5cm);
\draw (4,2) circle (2cm);
\draw[thick,->] (4,-.5) -- (4,-1.5) node[anchor=south west]{$y$};
\draw[thick,->] (6.5,2) -- (7.5,2)node[anchor=south east ]{$2x$};
\node at (4.1,4.23) {N};
\node at (4.1, -.2) {S};
\draw[fill] (4,4) circle (.05cm);
\draw[fill] (4,0) circle (.05cm);
\draw[fill] (8,4) circle (.05cm);
\draw[fill] (8,0) circle (.05cm);
\draw[fill] (4,-2) circle (.05cm);
\draw[fill] (8,0) -- (8,4);
\draw[fill] (2,-2) -- (6,-2);
\end{tikzpicture}
\caption{$\LG_2$ and its mirror}
\end{figure}
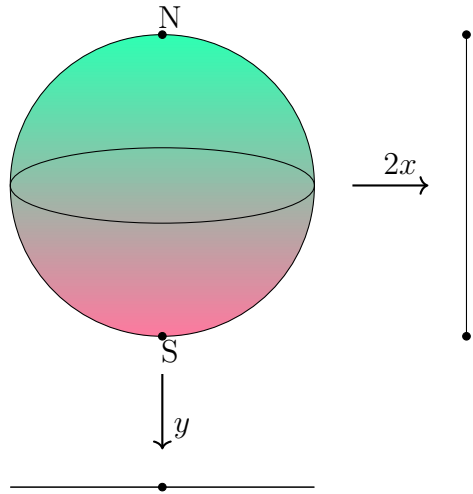

\noindent {\bf Acknowledgments:}
I thank Elizabeth Gasparim, and upon request, give her the following absurdly overstated acknowledgement:  
without her help this work would have provoked
\lq\lq a catastrophe, a tragedy, a cataclysmic, apocalyptic, monumental calamity\rq\rq, (Sir Humphrey Appleby, \cite[18:30]{YM}).

 The author thanks Luiz A. B. San Martin for providing many useful suggestions and for the proof of Theorem \ref{SanMartin}. 
 
 The author was partially supported by MathAmSud ANID project MATH2020011.
 The author was supported by Grant 2021/11750-7 S\~ao Paulo Research Foundation - FAPESP.

\end{document}